\documentclass[11pt]{article}

\usepackage[T1]{fontenc}
\usepackage{lmodern}
\usepackage{amsmath,amssymb,amsthm}
\usepackage[a4paper,margin=1in]{geometry}
\usepackage[hidelinks]{hyperref}
\usepackage{microtype}

\newtheorem{proposition}{Proposition}
\theoremstyle{remark}

\title{Direct Generation of a Somos--4 Sequence\\
from an Algebraic Generating Function}
\author{Thomas Scheuerle}
\date{}

\begin{document}
\maketitle

\begin{abstract}
Let
\[
G(z)=\sum_{n\ge 0} g_n z^n
\]
be a formal power series, and let
\[
H_n=\det(g_{i+j})_{0\le i,j\le n-1},\qquad H_0=1.
\]
We consider the five-parameter family in which
\[
H_1=\lambda,\qquad H_2=m,\qquad H_3=r,\qquad H_4=\eta,
\]
and the Hankel sequence satisfies the Somos--4 recurrence
\[
H_{n+4}H_n=H_{n+3}H_{n+1}+\tau H_{n+2}^2.
\]
We construct a quadratic algebraic generating function whose coefficient
sequence has these initial Hankel determinants and whose Hankel transform lies
in the family $A(1,\tau)$.  The construction begins with a Stieltjes
continued fraction, passes to a normalized form, and determines the
coefficients by a paired-Hankel reconstruction.  The resulting algebraic
equation is then obtained by eliminating the continued-fraction tail.  The
two algebraic branches coalesce at the origin, and the desired branch is
selected by the initial conditions. The essential point is that the construction removes this ambiguity by
prescribing, in addition to the ordinary Hankel transform, a companion Hankel
sequence obtained from the same Somos--4 orbit by a two-step shift.
\end{abstract}

\section*{Acknowledgments}
I am grateful to Florian Lengyel for reading an earlier version of this manuscript and for pointing out important errors, and to Michael Somos for pointing out important suggestions for improvement.

\section{Context and relation to earlier work}
The present note is concerned with an inverse Hankel-realization problem.  We
start from prescribed initial determinants
\[
(H_1,H_2,H_3,H_4)=(\lambda,m,r,\eta)
\]
and a Somos--4 recurrence with parameter $\tau$, and we construct a formal
power series whose leading Hankel minors realize the corresponding
$A(1,\tau)$ orbit.  In addition, the construction fixes the once-shifted
minors, and this paired-Hankel normalization determines the odd and even
Stieltjes coefficients separately.  The resulting alternating tail recurrence
closes quadratically, and after restoring the initial levels one obtains an
explicit algebraic equation for the generating function.

This is the inverse counterpart to the better-known forward problem: given a
generating function, determine whether its Hankel transform satisfies a Somos
recurrence.  The latter has been studied, for example, by Xin, Barry,
Wang--Zhang, and Liu--Wang--Zhang.  The present construction takes the
opposite direction and seeks a direct realization of prescribed Somos--4
Hankel data.

Seeking literature on the inverse problem, we find:  Chang, Hu, and Xin
construct Hankel solutions after a gauge normalization of adjacent
terms~\cite{ChangHuXin2015}.  Hone develops genus-one continued-fraction and
Hankel formulae, including gauge factors and an additional moment
parameter~\cite{Hone2021}.  Chang and Liu treat generic bilateral initial
conditions for quadratic orthogonal pairs and obtain algebraic series,
coefficient recurrences, and Hankel representations~\cite{ChangLiu2026}.
These works provide the closest setting for the present construction.

The contribution of this note is a direct Stieltjes-fraction realization of
generic $A(1,\tau)$ data in a particular positive-order paired-Hankel
normalization.  Here $H_0=1$ is the empty-determinant convention, while the
recurrence starts at $n=1$, so the four initial determinants remain
independent.  The ordinary and once-shifted determinant prescriptions recover
the Stieltjes coefficients, and their alternating recurrence is compatible
with a quadratic family of continued-fraction tails.  Restoring the initial
levels then gives the explicit quadratic equation recorded below.

The underlying tools come from the classical theory of continued fractions,
moment sequences, and Hankel determinants; see Wall~\cite{Wall1948}, Jones and
Thron~\cite{JonesThron1980}, Flajolet~\cite{Flajolet1980},
Krattenthaler~\cite{Krattenthaler1999,Krattenthaler2005},
Layman~\cite{Layman2001}, and French~\cite{French2007}.  The quadratic
transformation of Sulanke and Xin~\cite{SulankeXin2007} and the related
methods of Gessel and Xin~\cite{GesselXin2006} are particularly effective for
structured Hankel identities.  The wider Somos context includes the Laurent
phenomenon and elliptic dynamics
\cite{Gale1991,FominZelevinsky2002,VanderPoortenSwart2006,Hone2005,HoneSwart2008}.
\section{Hankel determinants and Somos parameters}

Let
\[
 G(z)=\sum_{n\geq0}g_nz^n
\]
be a formal power series, and for $n\geq1$ define its ordinary Hankel
determinants by
\begin{equation}
 H_n=\det(g_{i+j})_{0\leq i,j\leq n-1},
 \qquad H_0=1.
 \label{eq:Hankel}
\end{equation}

The five parameters $\lambda,m,r,\eta,\tau$ have the following roles:
\begin{equation}
 H_1=\lambda,\qquad H_2=m,\qquad H_3=r,\qquad H_4=\eta,
 \label{eq:initial-Hankel}
\end{equation}
and the Hankel sequence satisfies the Somos--4 recurrence
\begin{equation}
 H_{n+4}H_n=H_{n+3}H_{n+1}+\tau H_{n+2}^2,\qquad n\geq1.
 \label{eq:Somos}
\end{equation}
Thus $\tau$ is the second coefficient of the Somos family $A(1,\tau)$,
whereas $\lambda,m,r,$ and $\eta$ select the initial Hankel determinants.
The algebraic generating function producing these determinants is given in
the next section.  The construction makes one additional choice that is easy
to overlook: together with $(H_n)_{n\geq0}$, it fixes the companion
Hankel sequence
\begin{equation}
 \widehat H_n=\det(g_{i+j+1})_{0\leq i,j\leq n-1}
 =H_{n+2}\qquad(n\geq1).
 \label{eq:companion-Hankel-choice}
\end{equation}
Thus the companion is obtained by advancing the Somos--4 orbit by two indices
without applying any additional normalization factor.  Its first term is
$\widehat H_1=g_1=H_3=r$, not $H_2=m$.  This second prescription is part
of the definition of the selected coefficient sequence; it is not a
consequence of the ordinary Hankel determinants alone.  Section~\ref{sec:nonuniqueness} explains why prescribing
both sequences removes the degree of freedom discussed there.

\section{Behind the construction: the Stieltjes fraction}
The quadratic equation below may be obtained by first constructing a
Stieltjes continued fraction whose Hankel determinants follow the prescribed
Somos--4 orbit.  Write
\begin{equation}
 G(x)=\cfrac{c_0}{1-\cfrac{c_1x}{1-\cfrac{c_2x}{1-\cfrac{c_3x}{\ddots}}}}.
 \label{eq:S-fraction}
\end{equation}
For this normalization, the ordinary Hankel determinants are\cite{Wall1948,JonesThron1980,Flajolet1980}
\begin{equation}
 H_n=c_0^n\prod_{j=1}^{n-1}(c_{2j-1}c_{2j})^{n-j}.
 \label{eq:S-fraction-Hankel}
\end{equation}
Thus
\[
 H_1=c_0,\qquad
 H_2=c_0^2c_1c_2,\qquad
 H_3=c_0^3c_1^2c_2^2c_3c_4,
\]
and similarly at higher orders.

It is useful first to remove the exponential scaling caused by $H_1$.
Set
\begin{equation}
 a_1=1,\qquad
 a_2=\frac{m}{\lambda^2},\qquad
 a_3=\frac{r}{\lambda^3},\qquad
 a_4=\frac{\eta}{\lambda^4}.
 \label{eq:normalized-Somos-data}
\end{equation}
Then $H_n=\lambda^n a_n$ for $1\leq n\leq4$, and the normalized sequence
$(a_n)$ obeys the same Somos--4 recurrence,
\begin{equation}
 a_{n+4}a_n=a_{n+3}a_{n+1}+\tau a_{n+2}^2,\qquad n\geq1.
 \label{eq:normalized-Somos}
\end{equation}
The powers of $\lambda$ in \eqref{eq:normalized-Somos-data} are essential:
for nonunit $H_1=\lambda$, the correct normalization is
$a_j=H_j/\lambda^j$, not $H_j/\lambda^{j-1}$.

A convenient initial segment of the Stieltjes coefficients is
\begin{align}
 c_0&=\lambda, &
 c_1&=\lambda^2a_3, &
 c_2&=\frac{a_2}{\lambda^2a_3},\\
 c_3&=\frac{a_4}{a_2a_3}, &
 c_4&=\frac{a_3^2}{a_2a_4}, &
 c_5&=\tau\frac{a_2a_3}{a_4}+\frac{a_2^2}{a_3}.
 \label{eq:initial-S-coefficients}
\end{align}
Equivalently, in the parameters used throughout this paper,
\begin{equation}
 \begin{aligned}
 c_0&=\lambda,&
 c_1&=\frac{r}{\lambda},&
 c_2&=\frac{m}{\lambda r},\\
 c_3&=\frac{\lambda\eta}{mr},&
 c_4&=\frac{r^2}{m\eta},&
 c_5&=\frac{\tau mr}{\lambda\eta}+\frac{m^2}{\lambda r}.
 \end{aligned}
 \label{eq:initial-S-coefficients-original}
\end{equation}
The remaining coefficients are generated alternately by
\begin{equation}
 c_{2k}=\frac{1}{c_{2k-2}c_{2k-1}},\qquad
 c_{2k+1}=c_{2k-2}+\frac{\tau}{c_{2k-1}}
 \qquad(k\geq3).
 \label{eq:S-coefficient-recurrence}
\end{equation}
In particular,
\[
 c_6=\frac1{c_4c_5},\qquad
 c_7=c_4+\frac{\tau}{c_5},\qquad
 c_8=\frac1{c_6c_7},\qquad
 c_9=c_6+\frac{\tau}{c_7}.
\]
Substitution in \eqref{eq:S-fraction-Hankel} gives
$H_j=\lambda^j a_j$ for $1\leq j\leq4$.  At the next order it gives
\[
 \frac{H_5}{\lambda^5}=\frac{a_3a_4c_5}{a_2}
 =a_2a_4+\tau a_3^2,
\]
which is exactly the first Somos--4 step.  The alternating recurrence
\eqref{eq:S-coefficient-recurrence} continues this mechanism.

The standard once-shifted Hankel product formula~\cite{Krattenthaler2005} for the Stieltjes fraction is
\begin{equation}
 \widehat H_n=(c_0c_1)^n\prod_{j=1}^{n-1}(c_{2j}c_{2j+1})^{n-j}
 \qquad(n\geq1).
 \label{eq:shifted-S-fraction-Hankel}
\end{equation}
Together with \eqref{eq:S-fraction-Hankel}, this formula gives a direct
reconstruction result.

\subsection{Paired-Hankel reconstruction}
Let $F$ be a field, let $H_0=K_0=1$, and suppose $H_nK_n\ne0$ for
$n\geq1$.  There is a unique nondegenerate Stieltjes fraction of the form
\eqref{eq:S-fraction} whose ordinary and once-shifted Hankel determinants
are $H_n$ and $K_n$, respectively.  Its coefficients are
\begin{equation}
 c_0=H_1,\qquad
 c_{2n-1}=\frac{H_{n-1}K_n}{H_nK_{n-1}},\qquad
 c_{2n}=\frac{H_{n+1}K_{n-1}}{H_nK_n}\quad(n\geq1).
 \label{eq:paired-reconstruction}
\end{equation}
Indeed, taking adjacent determinant ratios in
\eqref{eq:S-fraction-Hankel} and
\eqref{eq:shifted-S-fraction-Hankel} and telescoping alternately gives
\eqref{eq:paired-reconstruction}.  Conversely, substitution of these
coefficients into the two product formulae telescopes back to $H_n$ and
$K_n$.  Nonvanishing makes every division legitimate and proves uniqueness
in the S-fraction chart.

In the present construction $K_0=1$ and $K_n=H_{n+2}$ only for $n\geq1$.
Thus $c_1=H_3/H_1$, $c_2=H_2/(H_1H_3)$, and, for $n\geq2$,
\begin{equation}
 c_{2n-1}=\frac{H_{n-1}H_{n+2}}{H_nH_{n+1}},\qquad
 c_{2n}=\frac{H_{n+1}^2}{H_nH_{n+2}}.
 \label{eq:paired-reconstruction-special}
\end{equation}
These identities recover \eqref{eq:initial-S-coefficients-original} and
\eqref{eq:S-coefficient-recurrence}.  They also prove for every positive
order that
\begin{equation}
 \det(g_{i+j+1})_{0\leq i,j\leq n-1}=H_{n+2}\qquad(n\geq1).
 \label{eq:literal-shifted-Hankel}
\end{equation}
In particular, $g_1=c_0c_1=r$.  Thus the ordinary determinants fix only the
products $c_{2j-1}c_{2j}$, whereas the companion determinants fix their
splitting into odd and even coefficients.  This gives the particular
representative used throughout the paper.  The formulas are classical in
substance; their role here is to make the all-index determinant assertions
and the qualified uniqueness statement explicit.  The next step is to eliminate the recursively defined continued-fraction
tail. The following isolated calculation explains why this elimination closes
quadratically and hence provides the conceptual bridge to the explicit equation
in Section~\ref{sec:algebraic-gf}.

\subsection{Quadratic closure of the recurrent tail}
\label{subsec:tail-closure}
For a sequence $(d_n)_{n\geq1}$, write
\begin{equation}
 T_n(x)=\cfrac{1}{1-\cfrac{d_nx}{1-\cfrac{d_{n+1}x}{1-\ddots}}},
 \qquad T_n=\frac{1}{1-d_nxT_{n+1}}.
 \label{eq:tail-fraction}
\end{equation}
The recurrence in \eqref{eq:S-coefficient-recurrence} becomes transparent
after the tail is started at $c_4$. Put
\begin{equation}
 d_n=c_{n+3},\qquad b=d_1=c_4,\qquad d=d_2=c_5,\qquad t=\tau.
 \label{eq:tail-reindexing}
\end{equation}
Then $d_1=b$, $d_2=d$, $d_3=1/(bd)$, and thereafter
\begin{equation}
 d_n=\frac1{d_{n-1}d_{n-2}}\quad(n\geq3\text{ odd}),\qquad
 d_n=\frac{t}{d_{n-2}}+d_{n-3}\quad(n\geq4\text{ even}).
 \label{eq:isolated-tail-recurrence}
\end{equation}
Thus the alternating recurrence can be studied independently of the first
three levels of the original Stieltjes fraction.

\begin{proposition}[Quadratic equation for the recurrent tail]
\label{prop:tail-quadratic}
Let $T=T_1$ be the formal continued fraction defined by
\eqref{eq:tail-fraction}--\eqref{eq:isolated-tail-recurrence}. Then $T(0)=1$ and
\begin{equation}
 \mathcal A(x)T^2+\mathcal B(x)T+\mathcal C(x)=0,
 \label{eq:tail-quadratic}
\end{equation}
where
\begin{align}
 \mathcal A(x)&=(bt+1)x-b\bigl((b+d)t+1\bigr)x^2,
 \label{eq:tail-A}\\
 \mathcal B(x)&=bdt\,x^2+\bigl(-bt+b^2d+d^2b-1\bigr)x-bd,
 \label{eq:tail-B}\\
 \mathcal C(x)&=bd-bd^2x.
 \label{eq:tail-C}
\end{align}
\end{proposition}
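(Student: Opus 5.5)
The plan is to show that every odd-indexed tail $T_{2k+1}$ satisfies a quadratic of the same shape as \eqref{eq:tail-quadratic}, with $(b,d)$ replaced by the shifted pair $(d_{2k+1},d_{2k+2})$. We would then conclude by a coefficientwise uniqueness argument. Write $Q_{b,d}(x,T)=\mathcal A T^2+\mathcal B T+\mathcal C$ for the quadratic of the proposition, viewed as depending on the parameters $(b,d)$ with $t$ fixed.

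First, $T(0)=1$ is immediate from \eqref{eq:tail-fraction}. At $x=0$ we have $\mathcal A(0)=0$, $\mathcal B(0)=-bd$ and $\mathcal C(0)=bd$, so $Q_{b,d}(0,T)=bd(1-T)$. Hence $T=1$ is a simple root at the origin. Since $\partial_T Q_{b,d}(0,1)=-bd\neq0$, the coefficients can be solved recursively (formal implicit function theorem), so there is a unique formal power series $U_{b,d}(x)$ with $U_{b,d}(0)=1$ and $Q_{b,d}(x,U_{b,d})=0$.

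Next, we record the shift map of \eqref{eq:isolated-tail-recurrence} on consecutive pairs:
\[
(b,d)\longmapsto(b',d')=\Bigl(\frac1{bd},\;\frac td+b\Bigr).
\]
This gives $d_3=b'$ and $d_4=d'$, and applying the same map to $(d_3,d_4)$ produces $(d_5,d_6)$, and so on, because the odd and even rules only ever use the previous pair. Two steps of \eqref{eq:tail-fraction} give
\[
T_1=\frac{1-dxT_3}{1-dxT_3-bx}.
\]
The key step is the algebraic identity: substituting $T=(1-dxS)/(1-dxS-bx)$ into $Q_{b,d}$ and clearing the denominator $(1-dxS-bx)^2$ yields
\[
(1-dxS-bx)^2\,Q_{b,d}\!\Bigl(x,\tfrac{1-dxS}{1-dxS-bx}\Bigr)=\kappa(x)\,Q_{b',d'}(x,S)
\]
for some factor $\kappa(x)$ (presumably a power of $x$ times a nonzero constant in $b,d,t$) that is independent of $S$. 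Granting this, $S=U_{b',d'}$ makes the right-hand side vanish. Moreover $\frac{1-dxU_{b',d'}}{1-dxU_{b',d'}-bx}$ takes the value $1$ at $x=0$, so by uniqueness it equals $U_{b,d}$. Thus the family $U$ obeys the same two-step recursion as the tails.

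Finally, for a formal continued fraction of the form \eqref{eq:tail-fraction}, the coefficient of $x^N$ in $T_1$ depends only on $d_1,\dots,d_N$. We would prove by induction on $N$, simultaneously for all admissible pairs $(b,d)$, that $[x^n]T_1=[x^n]U_{b,d}$ for $n\le N$. The two-step relation expresses the first $N$ coefficients of both sides through the first $N-2$ coefficients of $T_3$ and $U_{b',d'}$, respectively, and these agree by the induction hypothesis applied to the pair $(b',d')$. Hence $T=U_{b,d}$, which is \eqref{eq:tail-quadratic}. Nonvanishing of all $d_n$ is assumed throughout, as in the nondegenerate setting.

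The main obstacle is the substitution identity. We would expand both sides as polynomials in $S$ and $x$ and compare the coefficients of $S^2$, $S^1$ and $S^0$, using $b'd'=(t+bd)/(bd^2)$ to simplify. We expect the matching of the $S^1$ coefficient to depend on the specific combination $-bt+b^2d+d^2b-1$ in $\mathcal B$. If a single constant $\kappa$ does not work, we would instead allow $\kappa$ to depend on $x$ and determine it from the $S^2$ coefficient first, then verify the other two coefficients.
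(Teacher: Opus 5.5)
Your proposal is correct and follows essentially the paper's proof: the key step is exactly the paper's two-step covariance identity under $(b,d)\mapsto(1/(bd),\,b+t/d)$, and a direct expansion confirms your guess for the factor, namely $\kappa(x)=b^3d^4x^2/(bd+t)$. The only difference is in the closing step: you use uniqueness of the simple root at $Y=1$ plus a coefficient induction (your observation that $[x^n]T_1$ depends only on $[x^k]T_3$ for $k\le n-2$ is right), whereas the paper iterates the identity to show that $Q_{b,d}(x,T_1)$ is divisible by $x^{2N}$ for every $N$ --- the same $x$-adic contraction phrased differently.
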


\begin{proof}
Removing one level of a Stieltjes fraction is a fractional-linear
transformation,
\begin{equation}
 T_{n+1}=\frac{T_n-1}{d_nxT_n}.
 \label{eq:tail-extraction}
\end{equation}
Consequently, quadratic relations are preserved under tail extraction. More
explicitly, if $Q_n(Y)$ is quadratic and $Q_n(T_n)=0$, then a polynomial for
the next tail is
\begin{equation}
 Q_{n+1}(Y)=(1-d_nxY)^2
 Q_n\!\left(\frac1{1-d_nxY}\right),
 \label{eq:quadratic-transport}
\end{equation}
For the quadratic in \eqref{eq:tail-quadratic}, write
$Q_{b,d}(x,Y)=\mathcal A(x)Y^2+\mathcal B(x)Y+\mathcal C(x)$ and set
\[
 b'=\frac1{bd},\qquad d'=b+\frac{t}{d}.
\]
A direct expansion gives the exact covariance identity
\begin{equation}
 (1-bx-dxZ)^2Q_{b,d}\!\left(x,\frac{1-dxZ}{1-bx-dxZ}\right)
 =\frac{b^3d^4x^2}{bd+t}\,Q_{1/(bd),\,b+t/d}(x,Z).
 \label{eq:two-tail-covariance}
\end{equation}
For the recurrent tails,
\[
 T_1=\frac{1-dxT_3}{1-bx-dxT_3}.
\]
The recurrence \eqref{eq:isolated-tail-recurrence} maps the parameter pair
$(b,d)$ for $T_1$ to $(1/(bd),b+t/d)$ for $T_3$.  Iterating
\eqref{eq:two-tail-covariance} therefore shows, for every $N$, that
$Q_{b,d}(x,T_1)$ is divisible by $x^{2N}$ in
$\mathbb Q(b,d,t)[[x]]$.  Hence it vanishes identically.  Finally,
$Q_{b,d}(0,Y)=bd(1-Y)$, so the formal branch satisfying $T_1(0)=1$ is unique.
\end{proof}
This proposition gives a short derivation of the algebraic nature of the full
generating function. Let $T=T_4$ denote the tail of \eqref{eq:S-fraction}
beginning with $c_4$. The first three levels are recovered by
\begin{equation}
 T_3=\frac1{1-c_3xT},\qquad
 T_2=\frac1{1-c_2xT_3},\qquad
 T_1=\frac1{1-c_1xT_2},\qquad G=c_0T_1.
 \label{eq:recover-full-fraction}
\end{equation}
Each step is again a M\"obius transformation. Substituting
\eqref{eq:recover-full-fraction} into \eqref{eq:tail-quadratic}, followed by
$c_0=\lambda$ and the values in
\eqref{eq:initial-S-coefficients-original}, therefore produces a quadratic
polynomial in $G$. Clearing denominators and collecting powers of $z$ yields
exactly the polynomials in \eqref{eq:A}--\eqref{eq:C}. This explains
structurally why the recursively defined infinite fraction gives a quadratic
algebraic generating function: the recurrence is the compatibility condition
that keeps the continued-fraction tails inside a quadratic family, while the
omitted initial levels only conjugate that family by fractional-linear maps.

\section{The algebraic generating function}
\label{sec:algebraic-gf}

Define the polynomials $A(z),B(z),C(z)$ by
\begin{align}
 A(z)={}&\lambda\eta mr^2(\tau-\eta)z^3 \\
 &+r\Bigl(-(\lambda^2\eta m+m^2(m+r^2))\tau
 +(2m-1)\lambda^2\eta^2+\eta m^2(m+r^2)
 -\lambda r(m+r^2)\Bigr)z^2 \\
 &+\lambda\Bigl(m^2r^2\tau-(m-1)\lambda^2\eta^2
 -m(m^2+2mr^2-m-r^2)\eta+\lambda r^3\Bigr)z \\
 &+\lambda^2\eta m(m-1)r,
 \label{eq:A}\\[0.4em]
 B(z)={}&-\lambda^2\eta mr^2\tau z^3 \\
 &+r\Bigl((2\lambda^3\eta m+\lambda m^2(2m+r^2))\tau
 -(2m-1)\lambda^3\eta^2-\lambda\eta m^3
 +\lambda^2r(2m+r^2)\Bigr)z^2 \\
 &+\lambda^2\Bigl(-2m^2r^2\tau+2(m-1)\lambda^2\eta^2
 +m(2m^2+2mr^2-2m-r^2)\eta-2\lambda r^3\Bigr)z \\
 &-2\lambda^3\eta m(m-1)r,
 \label{eq:B}\\[0.4em]
 C(z)={}&-\lambda^2mr\Bigl(\tau(\lambda^2\eta+m^2)+\lambda r\Bigr)z^2 \\
 &+\Bigl(\lambda^3m^2r^2\tau+\lambda^4r^3
 -\lambda^3\eta(m-1)(\lambda^2\eta+m^2)\Bigr)z \\
 &+\lambda^4\eta m(m-1)r.
 \label{eq:C}
\end{align}
The generating function is the formal-power-series branch of
\begin{equation}
 A(z)G(z)^2+B(z)G(z)+C(z)=0
 \label{eq:quadratic}
\end{equation}
\subsection{Discriminant and residual genus-one curve}
Define
\begin{equation}
 J=\frac{\lambda\eta}{mr}+\frac{m^2}{\lambda r}
   +\frac{r^2}{m\eta}+\frac{\tau mr}{\lambda\eta}.
 \label{eq:J-invariant}
\end{equation}
For the polynomials above, direct expansion and factorization give
\begin{equation}
 B(z)^2-4A(z)C(z)=\bigl(\lambda^2\eta m r^2 z\bigr)^2
 \bigl((1-Jz+\tau z^2)^2-4z^3\bigr).
 \label{eq:discriminant-factorization}
\end{equation}
After removing the square factor and setting $X=z^{-1}$, the residual
quadratic extension is birational to
\begin{equation}
 E_J:\qquad y^2=(X^2-JX+\tau)^2-4X.
 \label{eq:residual-curve}
\end{equation}
Its smooth completion is a genus-one curve whenever
\begin{equation}
 J^4\tau+J^3-8J^2\tau^2-36J\tau+16\tau^3-27\ne0.
 \label{eq:genus-one-nonsingularity}
\end{equation}
Indeed, the discriminant in $X$ of the quartic on the right of
\eqref{eq:residual-curve} is $256$ times the left-hand side of
\eqref{eq:genus-one-nonsingularity}.  The parameter $J$ is also the first
integral of the standard QRT dynamics for
$\beta_n=H_{n-1}H_{n+1}/H_n^2$.  The Somos--4 recurrence gives
$\beta_{n-1}\beta_n^2\beta_{n+1}=\beta_n+\tau$; hence the map
$(u,v)\mapsto(v,(v+\tau)/(uv^2))$ preserves
$uv+u^{-1}+v^{-1}+\tau(uv)^{-1}$.  Substitution of
$u=\lambda r/m^2$ and $v=m\eta/r^2$ yields \eqref{eq:J-invariant}.

\section{The two algebraic branches}

With
\[
 D(z)=B(z)^2-4A(z)C(z),
\]
the two algebraic branches may be represented by
\begin{equation}
 G(z)=\frac{-B(z)\mathbin{\pm}\sqrt{D(z)}}{2A(z)}.
 \label{eq:explicit}
\end{equation}
When the constant term of $A$ vanishes, the rationalized expressions
\[
 \frac{-2C(z)}{B(z)\mp\sqrt{D(z)}}
\]
can be used instead.  The sign in \eqref{eq:explicit} cannot be fixed
uniformly over the full parameter space.

Write
\[
 P(z,X)=A(z)X^2+B(z)X+C(z).
\]
The constant coefficients of \eqref{eq:A}--\eqref{eq:C} are
\[
 A_0=\lambda^2\eta m(m-1)r,\qquad
 B_0=-2\lambda^3\eta m(m-1)r,\qquad
 C_0=\lambda^4\eta m(m-1)r.
\]
Consequently,
\begin{equation}
 P(0,X)=A_0(X-\lambda)^2.
 \label{eq:double-root}
\end{equation}
Both algebraic branches therefore coalesce at $X=\lambda$ when $z=0$, and
\begin{equation}
 \partial_XP(0,\lambda)=2A_0\lambda+B_0=0.
 \label{eq:vanishing-derivative}
\end{equation}
The simple-root form of the formal implicit-function theorem is not
applicable at the origin.  When $m=1$, the polynomial has a factor $z$, and
the cancelled equation satisfies
\[
 \left.\frac{P(z,X)}z\right|_{z=0}
 =\lambda r^2(X-\lambda)
  ((\lambda r+\tau-\eta)X-\lambda(\lambda r+\tau)).
\]
Thus, generically, the second branch then has constant term
$\lambda(\lambda r+\tau)/(\lambda r+\tau-\eta)$ rather than
$\lambda$.  In either case the desired branch is selected by the paired
conditions $g_0=\lambda$ and $g_1=r$.  \begin{equation}
 G(z)=\lambda+zF(z),\qquad
 F(z)=\sum_{q\geq0}f_qz^q,\qquad f_q=g_{q+1}.
 \label{eq:desingularization}
\end{equation}
For
\[
 D_j=2\lambda A_j+B_j,\qquad
 E_j=\lambda^2A_j+\lambda B_j+C_j,
\]
where $C_3=0$, substitution into $P(z,G(z))=0$ gives
\[
 P(z,\lambda+zF)=\sum_{j=0}^3E_jz^j
 +z\left(\sum_{j=0}^3D_jz^j\right)F
 +z^2\left(\sum_{j=0}^3A_jz^j\right)F^2.
\]
Equation \eqref{eq:double-root} gives $E_0=D_0=0$.  An ordinary power-series
branch additionally requires
\begin{equation}
 E_1=0.
 \label{eq:compatibility}
\end{equation}
The next coefficient equation is
\begin{equation}
 A_0f_0^2+D_1f_0+E_2=0.
 \label{eq:first-coefficient}
\end{equation}
The quadratic in \eqref{eq:first-coefficient} factors as
\begin{equation}
 A_0f_0^2+D_1f_0+E_2
 =\lambda^2\eta mr(f_0-r)\bigl((m-1)f_0-mr\bigr).
 \label{eq:first-coefficient-factorization}
\end{equation}
Thus the prescribed value $f_0=g_1=r$ selects the required branch.  When
$m\ne1$, the other slope is $mr/(m-1)$.

\section{The two-step companion}
Let $G_{0;\ell,m,r,n,t}(x)=\sum_{q\geq0}g_qx^q$ denote the selected solution.
Its Hankel determinants satisfy
\begin{equation}
 (H_1,H_2,H_3,H_4)=(\ell,m,r,n),\qquad
 H_{q+4}H_q=H_{q+3}H_{q+1}+tH_{q+2}^2.
 \label{eq:parameter-meaning}
\end{equation}
The next two values are
\begin{align}
 H_5&=\frac{mn+tr^2}{\ell}, \label{eq:fifth-Hankel}\\
 H_6&=\frac{rH_5+tn^2}{m}
 =\frac{r(mn+tr^2)+t\ell n^2}{\ell m}. \label{eq:sixth-Hankel}
\end{align}
Remove the first coefficient and shift the remaining series to the origin:
\begin{equation}
 \widehat G(x)=\frac{G_{0;\ell,m,r,n,t}(x)-g_0}{x}
 =\sum_{q\geq0}g_{q+1}x^q.
 \label{eq:coefficient-tail}
\end{equation}
Its Hankel transform is
\begin{equation}
 \det(g_{i+j+1})_{0\leq i,j\leq q-1}
 =H_{q+2}\qquad(q\geq1).
 \label{eq:tail-shift-hankel-identity}
\end{equation}
It therefore begins with
\[
 r,\quad n,\quad
 \frac{mn+tr^2}{\ell},\quad
 \frac{r(mn+tr^2)+t\ell n^2}{\ell m},\ldots .
\]
This is exactly the two-step advanced orbit $(H_{q+2})_{q\geq1}$.  It therefore satisfies the same recurrence with parameter $t$, starting at positive order $q\geq1$.
Equivalently, it has the initial parameter tuple
\begin{equation}
 \left(r,n,
 \frac{mn+tr^2}{\ell},
 \frac{r(mn+tr^2)+t\ell n^2}{\ell m},t\right).
 \label{eq:two-step-parameter-shift}
\end{equation}
The assertion is an identity of Hankel transforms, not necessarily of the
underlying coefficient sequences.  

\section{A recurrence for the coefficients}

Using the notation of \eqref{eq:desingularization}, division of the expanded
quadratic equation by $z^2$ yields
\begin{equation}
 A(z)F(z)^2+(D_1+D_2z+D_3z^2)F(z)+E_2+E_3z=0.
 \label{eq:desingularized-quadratic}
\end{equation}
The branch condition is
\begin{equation}
 A_0r^2+D_1r+E_2=0,
 \label{eq:branch-selection}
\end{equation}
and the branch is simple precisely when
\begin{equation}
 2A_0r+D_1\ne0.
 \label{eq:nondegeneracy}
\end{equation}
Equivalently,
\begin{equation}
 (2A_0r+D_1)^2=D_1^2-4A_0E_2.
 \label{eq:branch-discriminant}
\end{equation}

Define
\begin{equation}
 U_q=\sum_{k=0}^{q}f_kf_{q-k},\qquad U_q=0\quad(q<0).
 \label{eq:Uq}
\end{equation}
Coefficient extraction from \eqref{eq:desingularized-quadratic} gives
\begin{equation}
 \sum_{j=0}^{3}A_jU_{q-j}
 +D_1f_q+D_2f_{q-1}+D_3f_{q-2}+E_{q+2}=0,
 \label{eq:convolution}
\end{equation}
where $f_q=0$ for $q<0$ and $E_j=0$ for $j>3$.  For $q\geq1$,
\begin{equation}
 U_q=2rf_q+\sum_{k=1}^{q-1}f_kf_{q-k}.
 \label{eq:Uq-split}
\end{equation}
Under condition \eqref{eq:nondegeneracy}, the coefficients are therefore generated
recursively by
\begin{equation}
 f_q=-\frac{
 A_0\displaystyle\sum_{k=1}^{q-1}f_kf_{q-k}
 +\displaystyle\sum_{j=1}^{3}A_jU_{q-j}
 +D_2f_{q-1}+D_3f_{q-2}+E_{q+2}}
 {2A_0r+D_1}.
 \label{eq:coefficient-recurrence}
\end{equation}
This quadratic convolution recurrence follows directly from the algebraic
generating-function equation.  On the selected branch,
\begin{equation}
 2A_0r+D_1=-\lambda^2\eta mr^2,
 \label{eq:selected-branch-denominator}
\end{equation}
so condition \eqref{eq:nondegeneracy} holds throughout the nondegenerate
S-fraction chart.  Because $F$ is algebraic of degree at most two, it is
D-finite.  Consequently, the same coefficient sequence also satisfies a
fixed-width linear recurrence with polynomial coefficients in the index.

\section{Base field and nondegeneracy}
The generic construction is over the rational-function field
$\mathbb Q(\lambda,m,r,\eta,\tau)$.  For scalar specializations, the initial
conditions require at least
\[
 \lambda mr\eta(m\eta+\tau r^2)\ne0,
\]
together with the nonvanishing of every later Hankel quantity that occurs as
a denominator.  Accordingly, every reconstruction and uniqueness statement
in this paper is understood within this nondegenerate S-fraction chart.

\section{Nonuniqueness of the coefficient sequence}
\label{sec:nonuniqueness}

The ordinary Hankel transform does not, in general, determine a coefficient
sequence.  If only $(H_n)$ is prescribed, the factorization
$c_{2j-1}c_{2j}$ visible in \eqref{eq:S-fraction-Hankel} leaves room for
variation in the individual Stieltjes coefficients.  For this reason a
five-parameter algebraic generating function should not be read as the unique
moment representation attached to an ordinary Somos--4 Hankel orbit.

The construction in this paper removes this ambiguity by imposing the
additional companion condition \eqref{eq:companion-Hankel-choice}.  In other
words, we prescribe two product sequences: the first is the Somos--4 sequence
$(H_n)$, and the second is the same sequence advanced by two determinant
indices.  Together with $c_0=\lambda$, these two prescriptions determine the
odd and even Stieltjes coefficients separately through
\eqref{eq:initial-S-coefficients} and \eqref{eq:S-coefficient-recurrence}.
Hence the algebraic generating function in Section~\ref{sec:algebraic-gf} is
the fully specified representative selected by those two Hankel-level
requirements.  This choice is already built into the initial Stieltjes
coefficients above; making it explicit here clarifies how the apparent degree
of freedom is eliminated.

Other moment sequences can still share the first ordinary Hankel transform
when the companion condition is not imposed.  For example, the binomial
transform, a standard Hankel-transform-preserving operation
\cite{Layman2001,French2007},
\begin{equation}
 g'_q=\sum_{k=0}^{q}\binom{q}{k}g_k,
 \label{eq:binomial-transform}
\end{equation}
has generating function
\[
 G'(z)=\frac{1}{1-z}G\!\left(\frac{z}{1-z}\right)
\]
and preserves the Hankel determinants.  Scalar and exponential rescalings of
coefficient sequences produce explicit normalization factors in the
determinants, while affine changes of the underlying variable give further
related moment functionals.  Equality of Hankel transforms must therefore not
be replaced by equality of generating functions without an independent
argument.

\section*{Disclosure on authoring and verification tools}
The text of this manuscript was revised and partially rewritten with the
assistance of AI-based language tools.  The purpose was to improve clarity and
place the mathematical material into proper \LaTeX while preserving the
technical content and the mathematical claims.  The author remains responsible
for the final text, all statements, and the correctness of the formulas.

AI-assisted workflows were also used to draft and test PARI/GP scripts for
verifying the identities and formulas appearing in this paper.  PARI/GP
merits explicit recognition as a powerful and highly effective tool for exact
symbolic and arithmetic verification, and it played a useful role in checking
the computations underlying the present work.

\end{document}